\newcommand{\simm}[1]{\underset{#1}{\sim}}
\numberwithin{equation}{subsection}
\newtheorem{theorem}{Theorem}[section]
\newtheorem{lemma}[theorem]{Lemma}
\newtheorem{corollary}[theorem]{Corollary}
\newtheorem{proposition}[theorem]{Proposition}
\theoremstyle{definition}
\title{Unimodular rows over monoid extensions of overrings of polynomial rings}
\author{Maria A. Mathew} 
\address{Department of Mathematics, Indian Institute of Technology Bombay, Powai, Mumbai, 400076, India}
\email{maria@math.iitb.ac.in}
\author{Manoj K. Keshari} 
\address{Department of Mathematics, Indian Institute of Technology Bombay, Powai, Mumbai, 400076, India}
\email{keshari@math.iitb.ac.in}
\date{09, 15, 2020}
\keywords{unimodular rows, monoid algebra, cancellation, projective modules}
\subjclass{13C10}
\begin{document}
	
\begin{abstract}
 Let $R$ be a commutative Noetherian ring of dimension $d$ and $M$ a commutative cancellative torsion-free seminormal monoid. Then (1) Let $A$ be a ring of type $R[d,m,n]$ and $P$ be a projective $A[M]$-module of rank $r \geq max\{2,d+1\}$. Then the action of $E(A[M] \oplus P)$ on $Um(A[M] \oplus P)$ is transitive and (2) Assume $(R, m, K)$ is a regular local ring containing a field $k$ such that either $char$ $k=0$ or $ char$ $k = p$ and $tr$-$deg$ $K/\mathbb{F}_p \geq 1$. Let $A$ be a ring of type $R[d,m,n]^*$ and $f\in R$ be a regular parameter. Then all finitely generated projective modules over $A[M],$ $A[M]_f$ and $A[M] \otimes_R R(T)$ are free. When $M$ is free both results are due to Keshari and Lokhande \cite{MKK&SAL-MR3153610}.
\end{abstract}

\maketitle

\section{Introduction}

In this paper$,$ rings are commutative Noetherian with unity and modules are finitely generated.

Let $R$ be a ring of dimension $d$. A ring $A$ is called of type $R[d,m,n]$ if it is an overring of a polynomial ring $R[X_1, \ldots, X_m]$ given by $A=R[X_1, \ldots, X_m, f_1(l_1)^{-1}, \ldots, f_n(l_n)^{-1}],$ where $f_i \in R[T]$ and  either $l_i$ is
some indeterminate $X_{i_j}$ for all $i$ or $R$ contains a field $k$ and $l_i =\sum_{j=1}^{m}\alpha_{ij}X_j - r_i,$ where $(\alpha_{i1},\ldots,\alpha_{im}) \in k^m-\{0\} $ and $r_i \in R$ for all $i, j$.  We say that $A$ is a ring of type $R[d,m,n]^*$ if we further assume that $f_i(T) \in k[T] $ and $r_i \in k$ for all $i$.

A monoid $M$ is  \textit{cancellative} if $ax=ay$ implies $x=y$ for $a,x, y \in M$.  A cancellative monoid is \textit{torsion-free} if for $x,y\in M$ and $n > 0,$ $x^n = y^n$ implies $x=y$.

A projective $R$-module $P$ is \textit{cancellative} if for any projective $R$-module $Q,$ $P \oplus R^n \simeq Q \oplus R^n$ implies $P \simeq Q$. Equivalently$,$ by Bhatwadekar (\cite{Bhatw-MR1690788}$,$ Proposition 2.17)$,$ $Aut(P\oplus R^n)$ acts transitively on the set $Um(P\oplus R^n)$ of unimodular elements for all $n\geq 1$.  

In \cite{MKK&SAL-MR3153610}$,$ Keshari and Lokhande proved that \textit{if $A$ is a ring of type $R[d,m,n]$ and  $P$ is a projective $A$-module of rank $ \geq max \{2, dim R+1\},$ then $E(A \oplus P)$ acts transitively on $Um(A \oplus P)$}.  This result was proved by Dhorajia and Keshari \cite{AMD&MKK-MR2564858}$,$ when $l_i$ is
some indeterminate $X_{i_j}$ for all $i$.  We extend above result to monoid algebra $A[M]$ as follows.

\begin{theorem}
	Let $A$ be a ring of type $R[d,m,n]$ and $M$ be a commutative cancellative torsion-free seminormal monoid. Let $P$ be a projective $A[M]$-module of rank $r \geq \max \{2, d+1\}$. Then $E(A[M] \oplus P)$ acts transitively on $Um(A[M] \oplus P)$. In particular$,$ P is cancellative.
\end{theorem}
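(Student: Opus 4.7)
The plan is to bootstrap from the Keshari-Lokhande polynomial case ($M = \mathbb{Z}_+^s$, where $A[M] = A[Y_1,\ldots,Y_s]$ is again of type $R[d, m+s, n]$) to general seminormal monoids using standard descent techniques for seminormal monoid algebras. First I would reduce to $M$ finitely generated by a direct limit argument: since $P$ is finitely presented and $v \in Um(A[M] \oplus P)$ involves only finitely many monomials, all data descends to $A[M']$ for some finitely generated submonoid $M' \subset M$, and one may replace $M'$ by its seminormalization while keeping it finitely generated and seminormal.

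Next, I would establish (or invoke) a Quillen--Suslin type local--global principle for the elementary action on $Um(A[M] \oplus P)$, reducing the problem to the case where $R$ is local. The principle must be compatible with both the monoid structure on $A[M]$ and the non-standard localizations $f_i(l_i)^{-1}$ appearing in the definition of a ring of type $R[d, m, n]$, so some care is needed in its formulation.

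With $R$ local, I would invoke Gubeladze's structure theorem for projective modules over seminormal monoid algebras to conclude that $P$ is extended from $A$, say $P \cong Q \otimes_A A[M]$ for some projective $A$-module $Q$ of rank $r$. A specialization argument (evaluating the generators of $M$ to the identity of the monoid) combined with a Horrocks-type lifting would then reduce elementary transitivity on $Um(A[M] \oplus P)$ to the same question on $Um(A \oplus Q)$, which is precisely the Keshari-Lokhande result applied to $A$.

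The hard part will be the Quillen patching step: standard patching machinery handles ordinary polynomial extensions well, but here it must accommodate both the monoid action and the localizations by $f_i(l_i)^{-1}$. Ensuring that these non-standard localizations interact correctly with monoid-indexed patching data, and verifying that all intermediate rings along the reduction remain of an appropriate type so that the inductive setup goes through, is the main technical hurdle. A secondary difficulty will be confirming that Gubeladze's extension theorem applies in the precise form needed over the local ring of type $R[d,m,n]$ produced by Step 2.
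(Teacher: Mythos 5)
There is a genuine gap, and it is located at the heart of your Step 3. The theorem assumes only that $R$ is Noetherian of dimension $d$; no regularity, normality or seminormality of $R$ (hence of $A$) is imposed. Gubeladze's extension/freeness theorem and Swan's generalization via the class $\mathcal{R}_n$ require the coefficient ring to be a PID, respectively to lie in $\mathcal{R}_n$, which forces regularity-type hypotheses; for a singular ring these statements fail (already $\mathrm{Pic}$ of a polynomial or monoid extension of a one-dimensional non-seminormal local ring is nontrivial, so rank-one projectives are not extended). Moreover, localizing $R$ at a maximal ideal does not make $A$ local, so even the local form of such structure theorems is unavailable. Transitivity of the elementary action on unimodular rows is a strictly weaker statement than extendedness of projectives and holds in this generality for a different reason, which your outline never touches: Gubeladze's theorem that $E_r(B[M])$ acts transitively on $Um_r(B[M])$ for $r \geq \max\{3, \dim B + 2\}$ over an \emph{arbitrary} Noetherian ring $B$ and cancellative $M$. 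The "specialization at the augmentation plus Horrocks-type lifting" you propose as a substitute is precisely the hard content of that theorem and cannot be reconstructed by standard polynomial-ring Horrocks arguments.

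For comparison, the paper's route is: (i) handle the free case $Um_r(A[M])$ by induction on the number $n$ of inverted elements $f_i(l_i)$, using Gubeladze's transitivity theorem as the base case and Milnor patching for unimodular rows over generalized Karoubi squares to descend along the localization $C \to C_{f_n}$; (ii) use Gabber's theorem plus Swan's extension result only over the total quotient ring $S^{-1}R$ (a product of fields, hence regular --- this is where seminormality of $M$ is actually used) to produce, via Lindel's lemma, a non-zerodivisor $s \in R$ with $P_s$ free; (iii) induct on $d$ by reducing the unimodular element modulo $s^2A[M]$, lifting the elementary matrix, and finishing with the Dhorajia--Keshari lemma, whose hypothesis is supplied by the free case. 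This dimension-shifting induction on $d$ via $s$, rather than Quillen patching over $\mathrm{Spec}(R)$, is what lets the argument avoid any extendedness claim and also lowers the rank bound from $d+2$ to $d+1$. Your reduction to finitely generated $M$ in Step 1 is harmless but not needed; the rest of the proposal would have to be rebuilt around the ingredients above.
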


Gabber \cite{Gabber-MR1891173} proved that \textit{for a field k$,$ all finitely generated projective $k[0,m,n]^*$- modules are free}. It was generalized by Keshari and Lokhande \cite{MKK&SAL-MR3153610} for projective $R[d,m,n]^*$-modules, where $(R, m, K)$ is a regular local ring containing a field $k$ such that either \textit{char} $k = 0$ or \textit{char} $k = p$ and $tr$-$deg$ $K/\mathbb{F}_p \geq 1$. 

Gubeladze \cite{Gubeladze2-MR937805} proved that \textit{for a principal ideal domain R and a commutative cancellative torsion-free seminormal monoid $M,$ all finitely generated projective $R[M]$-modules are free.} In developing an algebraic approach to Gubeladze's proof$,$ Swan \cite{Swan-MR1144038} extended Gubeladze result to rings of higher dimension via introduction of a class of domains $\mathcal{R}_n$ with $n>0$ such that if $R \in \mathcal{R}_n,$ then
\begin{enumerate}
	\item the localization of $R$ with respect to any maximal ideal of $R$ belongs to $\mathcal{R}_n,$
	\item $R(X) \in \mathcal{R}_n,$ where $R(X)$ is the localization of $R[X]$ with respect to the multiplicative set consisting of  all monic polynomials in $X,$
	\item if $R$ is local$,$ then all projective modules of rank $n$ over $R[X, X^{-1}]$ are free.
\end{enumerate}
Let $\mathcal{P}_n(R)$ denote the isomorphism class of finitely generated projective $R$-modules of constant rank $n$. Swan \cite{Swan-MR1144038} proved the result: \textit{Let $R \in \mathcal{R}_n$ and $M$ be a commutative cancellative torsion-free seminormal monoid with $U$ as the group of units of $M$. If $\mathcal{P}_n(R) \rightarrow \mathcal{P}_n(R[U])$ is onto$,$ then $\mathcal{P}_n(R) \rightarrow \mathcal{P}_n(R[M])$ is onto$,$ i.e.$,$ all projective $R[M]$-modules of rank $n$ are extended from $R$.} 

Let us recall Quillen's conjecture \cite{Quillen-MR427303}$,$ $\mathcal{Q}_n$\textit{: If $(A, \mathfrak{m})$ is a regular local ring of dimension $n$ and $u \in \mathfrak{m} \smallsetminus \mathfrak{m}^2,$ then all projctive $A_u$-modules are free}.  Bhatwadekar and Rao \cite{Bhatrao-MR709584} proved that $\mathcal{Q}_n$ is true when $R$ is a regular $k$-spot$,$ i.e.$,$ when $R$ is the localization of some affine $k$-algebra at a regular prime ideal. More generally$,$ they proved the result: \textit{Let $(R,\mathfrak m)$ be a regular $k$-spot with infinite residue field and $f\in \mathfrak m \smallsetminus \mathfrak m^2$ be a regular parameter. If $B$ is one of $R,$ $R(T)$ or $R_f,$ then projective modules over $B[X_1,\ldots,X_n,Y_1^{\pm 1},\ldots,Y_m^{\pm 1}]$ are free.}

Swan \cite{Swan-MR1144038} used the above result and proved that \textit{all localizations of regular affine algebras over fields belong to $\mathcal{R}_n$ for all $n>0$.} This in conjugation with Popescu's result (\cite{Popescu-MR986438}$,$ Theorem 3.1) gives an even bigger class of rings $R[d,m,n]^*$ in $\mathcal{R}_n$. Our fourth section deals with such class of domains and yields the following

\begin{theorem}
	Let $A$ be a ring of type $R[d,m,n]^*,$ where $(R, \mathfrak{m},K)$ is a regular local ring containing a field $k$ such that either char $k = 0$ or char $k=p$ and tr-deg $K/\mathbb{F}_p \geq 1$. Let $M$ be a commutative cancellative torsion-free seminormal monoid. Then
	\begin{enumerate} 
		\item all projective $A[M]$ and $A[M] \otimes_R R(T)$-modules are free;
		\item if $f, g\in R$ form a part of a regular system of parameters$,$ then all projective $A[M]_f$ and $A[M]_{fg}$-modules are free;
		\item if $g_1, \ldots, g_t \in R$ form a part of a regular system of parameters$,$ then projective $A[M]_{{g_1}\ldots {g_t}}$-modules of rank $\geq t$ are free. 
	\end{enumerate}  
\end{theorem}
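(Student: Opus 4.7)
The proof rests on Swan's monoid theorem combined with the Keshari--Lokhande freeness result for projective $R[d,m,n]^*$-modules. As the excerpt recalls (via Swan plus Popescu's approximation theorem), every ring $A$ of type $R[d,m,n]^*$ lies in the Swan class $\mathcal{R}_n$ for every $n \geq 1$. So the heart of the proof is to verify the hypothesis of Swan's theorem that $\mathcal{P}_n(A) \to \mathcal{P}_n(A[U])$ is onto, where $U$ is the group of units of $M$; once this is done, Swan gives projectives of $A[M]$ extended from $A$, and Keshari--Lokhande then trivializes them.

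For part (1), I would first reduce to finitely generated $M$: any finitely presented projective $A[M]$-module descends to $A[M_0]$ for some finitely generated submonoid $M_0 \subseteq M$, and after replacing $M_0$ by its seminormalization (finitely generated by Swan, and contained in $M$ by seminormality of $M$) we may assume $M_0$ is seminormal. For a finitely generated seminormal cancellative torsion-free monoid, the unit group $U_0$ is free abelian of finite rank $s$, so $A[U_0] \cong A[Y_1^{\pm 1},\ldots,Y_s^{\pm 1}]$. The key observation is that this Laurent polynomial extension is itself a ring of type $R[d, m+s, n+s]^*$: one adjoins the $Y_i$ as polynomial indeterminates and realizes each $Y_i^{-1}$ as $f(l)^{-1}$ with $f(T)=T \in k[T]$ and $l = Y_i$. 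By Keshari--Lokhande, its projectives are free, so the onto hypothesis holds, Swan applies, and a filtered-colimit argument completes the case of a general $M$. For $A[M] \otimes_R R(T) \cong (A \otimes_R R(T))[M]$, observe that $R(T)$ is a regular local ring whose residue field $K(T)$ satisfies the same characteristic/transcendence-degree hypothesis, and $A \otimes_R R(T)$ is of type $R(T)[d,m,n]^*$, so the previous argument applies.

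For parts (2) and (3), the same engine runs with base $A$ replaced by $A_f$, $A_{fg}$, or $A_{g_1 \cdots g_t}$. The Bhatwadekar--Rao theorem controls projectives over Laurent polynomial extensions of $R$, $R(T)$, and $R_f$, and its extension by Keshari--Lokhande to rings of type $R[d,m,n]^*$ (combined with Popescu) controls projectives over the corresponding Laurent polynomial extensions of $A_f$ and its further localizations. Iteration handles $A_{fg} = (A_f)_g$, using that $g$ remains a regular parameter of $R_f$ when $f, g$ are part of a regular system. Part (3) is obtained by induction on $t$, inverting one regular parameter at a time; the rank bound $r \geq t$ ensures that at each intermediate stage the freeness theorems (which require rank $\geq \max\{2, \dim R + 1\}$) remain applicable as the effective dimension drops.

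The main technical burden lies in (i) presenting $A[Y_1^{\pm 1},\ldots, Y_s^{\pm 1}]$ and its localizations at regular parameters as rings of type $R[d,m',n']^*$ so that Keshari--Lokhande freeness applies verbatim, (ii) tracking seminormality along the filtered-colimit argument when $M$ is not finitely generated, and (iii) checking that the regular-parameter hypothesis and residue-field conditions propagate correctly through the iterated localizations needed in parts (2) and (3).
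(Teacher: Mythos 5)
Your proposal follows essentially the same route as the paper: show $B \in \mathcal{R}_n$ via Theorem \ref{t8}, invoke Swan's patching theorem (Theorem \ref{t-3}) to reduce to the unit group $U$, write $U$ as a filtered limit of finite-rank free abelian groups, observe that $B[\mathbb{Z}^r]$ is again a ring of type $R[d,m+r,n+r]^*$, and conclude by the Keshari--Lokhande freeness theorem. Your write-up is more explicit than the paper's (on the reduction to finitely generated seminormal submonoids and on the $R(T)$ and iterated-localization cases), but the decomposition and key lemmas are the same.
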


Though one should note that the above result doesn't hold for rings of the type $R[d,m,n],$ counterexamples have been provied in \cite{MKK&SAL-MR3153610}.


\section{Preliminaries}

Given a projective $R$-module $P,$ $Um(P)$ denotes the set of unimodular elements of $P$ and $E(P\oplus R$) denotes the subgroup of $Aut(P\oplus R)$ generated by transvections. See \cite{MKK&SAL-MR3153610} for basic definitions.

Transvections define an action on the set of unimodular elements $Um(P)$. For $p, q \in Um(P),$ the notation $p \simm{P} q$ means that they are in the same orbit via the $E(P)$ action. When $P$ is free$,$ the action translates to that of multiplication on the right.
The equivalence of $p$ and $q$ is loosely written as $p \simm{R} q,$ where $P$ is understood. 

This paper generalizes the following theorem by Gubeladze (\cite{Gubeladze-MR3853049}$,$ Theorem 1.1)

\begin{theorem}\label{t1}
	Let $R$ be a ring of dimension $d$ and $M$ be a commutative cancellative monoid. Then $E_r(R[M])$ acts transitively on $Um_r(R[M]),$ if $r \geq \max\{3, d+2\}$.
\end{theorem}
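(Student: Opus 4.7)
The strategy is to reduce the general assertion to classical Bass--Suslin stability by exploiting the structure of commutative cancellative monoids. First, by the Quillen-style local-global principle for elementary transitivity on unimodular rows, it is enough to prove the conclusion with $R$ replaced by $R_{\mathfrak m}$ for every maximal ideal $\mathfrak m$ of $R$. Next, since any $v \in Um_r(R[M])$ and any finite chain of elementary transformations involve only finitely many monomials, the data lies in $R[M_0]$ for some finitely generated cancellative submonoid $M_0 \subseteq M$. Thus one may assume $R$ is local of dimension $d$ and $M$ is finitely generated cancellative.

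For such $M$, the group of fractions $G(M)$ is isomorphic to $\mathbb{Z}^s \oplus T$ with $T$ a finite torsion group, and I would induct on $s$. When $s = 0$, the monoid $M$ embeds in a finite group, so $R[M]$ is a finite $R$-module and $\dim R[M] = d$; Bass's stability theorem directly gives transitivity of $E_r$ on $Um_r$ for $r \geq \max\{3, d+2\}$. For the inductive step, choose $x \in M$ whose image in $G(M)/T$ is a primitive vector, splitting off a free $\mathbb{Z}$-summand, and let $N \subseteq M$ denote the corresponding complementary finitely generated submonoid. One expects $R[M]$ to sit over $R[N]$ as a polynomial-like extension, so that Suslin--Kopeiko type results on elementary transitivity for $Um_r(R[N][X])$ and $Um_r(R[N][X, X^{-1}])$ in the range $r \geq \max\{3, \dim R[N] + 2\}$, combined with the inductive hypothesis, should propagate transitivity one step further.

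The principal obstacle is exactly this inductive step: because $M$ is only cancellative and not assumed seminormal, the extension $R[N] \subseteq R[M]$ is not literally a polynomial or Laurent polynomial extension, so the Suslin--Kopeiko theorems cannot be invoked directly. The natural remedy is to pass to the integral closure $\bar M$ of $M$ inside $G(M)$, which is finitely generated with $R[\bar M]$ finite over $R[M]$, establish transitivity for the normal monoid ring $R[\bar M]$ via the polynomial/Laurent reduction, and then descend along the Milnor-type conductor square with vertices $R[M]$, $R[\bar M]$, $R[M]/\mathfrak c$, $R[\bar M]/\mathfrak c$ for the conductor ideal $\mathfrak c$. Tracking elementary matrices across this patching diagram and lifting them faithfully back to $R[M]$, in such a way that the rank condition $r \geq \max\{3, d+2\}$ supplies enough stability for every intermediate ring in the induction, constitutes the technical heart of the argument.
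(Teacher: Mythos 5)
The first thing to note is that the paper does not prove this statement at all: it is quoted verbatim from Gubeladze \cite{Gubeladze-MR3853049} (Theorem 1.1) and used as a black box, so there is no in-paper argument to compare against. What you have written is an attempt to reprove Gubeladze's theorem from classical stability results, and it does not succeed. You correctly identify the location of the difficulty yourself, but the problem is worse than the phrase ``technical heart of the argument'' suggests: the inductive step is not a technical gap to be filled, it is an approach that cannot yield the stated bound. A finitely generated cancellative monoid $M$ of rank $s$ essentially never splits as $N \oplus \mathbb{Z}_+$ or $N \oplus \mathbb{Z}$ (already $M = \langle (1,0),(1,1),(1,2)\rangle \subseteq \mathbb{Z}^2$ fails, and passing to the normalization $\bar M$ does not help, since normal affine monoids are not free unless the cone is unimodular). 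Even in the rare cases where $R[M] \cong R[N][X]$ or $R[N][X,X^{-1}]$, the Suslin--Kopeiko theorems you propose to invoke give transitivity only for $r \geq \max\{3, \dim R[N]+2\} = \max\{3, d+s+1\}$, which for $s \geq 2$ is strictly weaker than the asserted $r \geq \max\{3, d+2\}$. The entire content of the theorem is that the stable range bound depends only on $\dim R$ and not on the rank of $M$; any induction on the rank of $M$ that pays one unit of stability per split-off summand reproduces the trivial bound $\dim R[M] + 2$ and nothing more.

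Two further steps also need justification. The Quillen-type local--global reduction to $R_{\mathfrak m}$ is standard for polynomial extensions and for positively graded torsion-free monoids, but for a general cancellative $M$ (which may have nontrivial units and torsion) it is not an off-the-shelf fact; and descent of elementary orbits along the conductor square $R[M] \to R[\bar M]$ requires an excision statement for $E_r$-orbits, not merely the existence of a Milnor pullback. The reductions you do carry out correctly --- passage to finitely generated submonoids and the rank-zero case via Bass's theorem for the module-finite extension $R \subseteq R[M]$ --- are exactly the easy parts. Gubeladze's actual proof replaces the missing inductive step by an entirely different mechanism (``pyramidal descent'' over the rational cone of $M$), which is why the result is a theorem of his 2018 paper rather than a corollary of Bass--Suslin stability. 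In the context of the present paper, the correct move is simply to cite \cite{Gubeladze-MR3853049}, as the authors do.
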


The following is a slightly modified version of a lemma due to Lindel (\cite{Lindel-MR1322406}$,$ Lemma 1.1) and will follow from (\cite{AMD&MKK-MR2564858}$,$ Lemma 3.9).

\begin{lemma}\label{l1}
	Let $P$ be a projective $R[d,m,n]$-module of rank $r$. Then there exists an $s \in R,$ which satisfies the following
	\begin{enumerate}
		\item $P_s$ is free$,$
		\item there exists $p_1, \ldots p_r \in P$ and $\phi_1, \ldots, \phi_r \in P^*$ such that
		$ (\phi_i(p_j)) = diagonal(s,\ldots, s),$
		\item $sP \subset p_1R + \ldots + p_rR,$
		\item the image of $s$ in $R_{red}$ is a non-zerodivisor$,$
		\item $(0:sR) = (0: s^2R).$   
	\end{enumerate}
\end{lemma}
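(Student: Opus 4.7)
The approach is to reduce to Lemma 3.9 of Dhorajia--Keshari \cite{AMD&MKK-MR2564858} and then upgrade the conclusion to also contain (5) via a standard Noetherian trick.

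First I would apply Lemma 3.9 of \cite{AMD&MKK-MR2564858} to the projective $A$-module $P$, with $A = R[d,m,n]$. That lemma produces an element $s_0 \in R$, elements $p_1^0,\ldots,p_r^0 \in P$, and dual elements $\phi_1,\ldots,\phi_r \in P^*$ satisfying items (1)--(4) of the present lemma with $s$ replaced by $s_0$. This is the only step that uses the specific structure of $A$ as an overring of $R[X_1,\ldots,X_m]$: the whole point of the $R[d,m,n]$ hypothesis is precisely to guarantee that the Lindel-type element can be chosen in the small ring $R$ rather than merely in $A$.

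To achieve (5), observe that since $A$ is Noetherian the ascending chain of ideals $(0:s_0 A) \subseteq (0:s_0^2 A) \subseteq \cdots$ stabilizes, so there exists $k \geq 1$ with $(0:s_0^k A) = (0:s_0^{2k} A)$. Set $s := s_0^k \in R$ and $p_j := s_0^{k-1} p_j^0$, keeping the same $\phi_j$'s. Then (1) $P_s = P_{s_0}$ is free, and (2) $\phi_i(p_j) = s_0^{k-1}\phi_i(p_j^0) = s_0^k\,\delta_{ij} = s\,\delta_{ij}$. For (3), one has $sP = s_0\cdot s_0^{k-1}P \subseteq s_0^{k-1}\bigl(\sum p_i^0 R\bigr) = \sum p_i R$. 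A positive power of a non-zerodivisor in $R_{\mathrm{red}}$ remains a non-zerodivisor, giving (4); and (5) holds by the choice of $k$.

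The main obstacle is really concentrated in the first step: producing the Lindel-type element $s_0$ in the base ring $R$ and not merely in $A$. This is the heart of Dhorajia--Keshari's argument, which adapts Lindel's patching to overrings of polynomial rings and relies in an essential way on the explicit form $A=R[X_1,\ldots,X_m,f_1(l_1)^{-1},\ldots,f_n(l_n)^{-1}]$. Once $s_0 \in R$ is in hand, the promotion $s_0 \mapsto s_0^k$ is routine and costs nothing in terms of the earlier properties, so this is really the only place where genuine work is required.
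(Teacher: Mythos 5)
Your proposal is correct and matches the paper's approach: the paper offers no independent proof, simply noting that the lemma follows from Lemma 3.9 of Dhorajia--Keshari \cite{AMD&MKK-MR2564858}, which is exactly your first step. Your explicit power-raising argument $s_0 \mapsto s_0^k$ to secure condition (5) is a correct (and standard) way to make that deduction precise, and it is harmless even if the cited lemma already contains that condition.
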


Following is due to Dhorajia and Keshari (\cite{AMD&MKK1-MR2826426}$,$ Lemma 3.3)

\begin{lemma}\label{l2}
	Let $B$ be a ring and $P$ be a projective $B$-module of rank $r$. Let  $s\in B$ be as in the above lemma and $B' = \frac{B[X]}{(X^2 - s^2X)}$. Assume that $E_{r+1}(B')$ acts transitively on $Um_{r+1}(B')$. Then given $(b,p) \in Um(B \oplus P, s^2B),$ there exists  $\varepsilon \in E(B \oplus P)$ such that $\varepsilon(b,p)=(1,0)$.
\end{lemma}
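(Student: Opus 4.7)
The plan is to transfer the problem from $Um(B \oplus P)$ to $Um_{r+1}(B')$ via a patching construction, apply the transitivity hypothesis, and descend back. The ring $B' = B[X]/(X^2 - s^2 X)$ is a free $B$-module of rank $2$ and comes equipped with two $B$-algebra retractions $\pi_0, \pi_1 : B' \to B$ given by $X \mapsto 0$ and $X \mapsto s^2$. Since $X(X - s^2) = 0$ in $B'$, every prime of $B'$ contains $X$ or $X - s^2$, so an element of $B'^{\,r+1}$ is unimodular iff both $\pi_0$- and $\pi_1$-specializations are unimodular; moreover $B'_s \simeq B_s \times B_s$, making $B'$ the algebraic glueing of two copies of $B$ along $V(s^2)$.

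Using Lemma \ref{l1}, I would fix the trivialising data $(p_1, \ldots, p_r; \phi_1, \ldots, \phi_r)$, under which $P_s$ is free on $p_1, \ldots, p_r$ with $\phi_j/s$ as dual basis. Writing $b = 1 + s^2 c$ and $p = s^2 q$ with $c \in B$, $q \in P$, I would form the row $w(X) = (1 + Xc,\, X\phi_1(q), \ldots, X\phi_r(q)) \in B'^{\,r+1}$. Under $\pi_0$, $w$ specialises to $e_1$, which is trivially unimodular; under $\pi_1$, it specialises to $(b, \phi_1(p), \ldots, \phi_r(p)) \in B^{r+1}$, whose unimodularity over $B$ I would check by combining the unimodularity of $(b,p)$ in $B_s \oplus P_s$ (via the trivialisation) with the observation that modulo $s$ the first coordinate is a unit because $b \equiv 1 \pmod{s^2}$. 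Hence $w \in Um_{r+1}(B')$, and the hypothesis yields $\sigma \in E_{r+1}(B')$ with $\sigma e_1 = w$. Setting $\sigma_i = \pi_i(\sigma) \in E_{r+1}(B)$, the product $\tau = \sigma_1 \sigma_0^{-1}$ is congruent to the identity modulo $s^2$ (since $\pi_0 \equiv \pi_1 \pmod{s^2}$ on $B'$) and sends $e_1$ to $\pi_1(w)$.

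The main obstacle is the final descent of $\tau$ to an element $\varepsilon \in E(B \oplus P)$ taking $(b, p)$ to $(1,0)$. The two inputs that make this possible are (i) the $s^2$-congruence $\tau \equiv \mathrm{id}$, which supplies exactly one extra factor of $s$ beyond what Lemma \ref{l1}(2)--(3) provides, and (ii) the trivialising data $\phi_i(p_j) = s\delta_{ij}$ and $sP \subset \sum p_i B$, which permit elementary generators of $E_{r+1}(B)$ whose off-diagonal entries are divisible by $s^2$ to be realised as honest transvections of $B \oplus P$. The bookkeeping needed to assemble these ingredients and to absorb the $s$-mismatch between $\pi_1(w)$ (which records $(b, sp + (s\text{-torsion}))$ under $e_i \mapsto p_i$) and the actual element $(b,p)$ is a standard move in the Plumstead--Lindel--Swan framework, as carried out in Lemma 3.3 of \cite{AMD&MKK1-MR2826426}; executing this descent carefully produces the desired $\varepsilon$.
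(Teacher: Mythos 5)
The paper never proves this lemma: it is quoted from Dhorajia--Keshari (\cite{AMD&MKK1-MR2826426}, Lemma 3.3) with no in-paper argument, so the only meaningful comparison is with that cited proof. Measured against it, the first half of your reconstruction is accurate and is indeed how the argument runs: writing $b=1+s^2c$ and $p=s^2q$, forming $w(X)=(1+Xc,\,X\phi_1(q),\ldots,X\phi_r(q))$, checking $w\in Um_{r+1}(B')$ via the two retractions $X\mapsto 0$ and $X\mapsto s^2$ (every prime of $B'$ contains $X$ or $X-s^2$), invoking the transitivity hypothesis, and extracting $\tau=\sigma_1\sigma_0^{-1}\in E_{r+1}(B)\cap GL_{r+1}(B,s^2B)$ carrying $e_1$ to $(b,\phi_1(p),\ldots,\phi_r(p))$. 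All of these steps are correct as stated.

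The gap is that the ``final descent,'' which you defer to standard bookkeeping and to Lemma 3.3 of \cite{AMD&MKK1-MR2826426} itself, is the actual content of the lemma, and citing the result being proved is circular. Concretely, three things remain unexecuted. First, knowing that $\tau$ is elementary and congruent to the identity modulo $s^2$ does not by itself write $\tau$ as a product of elementary generators each congruent to the identity modulo a suitable power of $s$; a relative-elementary-subgroup argument is needed before the individual factors can be lifted to transvections of $B\oplus P$ using $\phi_i(p_j)=s\delta_{ij}$ and $sP\subset\sum p_iB$. Second, the comparison maps $\Phi\colon e_i\mapsto p_i$ and $\Psi\colon q\mapsto(\phi_1(q),\ldots,\phi_r(q))$ satisfy $\Psi\Phi=s\cdot\mathrm{id}$ but $\Phi\Psi=s\cdot\mathrm{id}$ only up to $s$-torsion; it is precisely conditions (4) and (5) of Lemma \ref{l1} that control this, and your sketch never invokes them. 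Third, converting ``$\tau$ carries $e_1$ to $(b,\phi_1(p),\ldots,\phi_r(p))$'' into ``$\varepsilon$ carries $(b,p)$ to $(1,0)$'' requires absorbing the resulting stray factor of $s$, again via $(0:sB)=(0:s^2B)$. Until these are carried out, your proposal establishes the reduction to $Um_{r+1}(B')$ but not the conclusion of the lemma.
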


A commutative diagram of rings
\begin{figure}[h]
	\[
	\begin{tikzcd}[row sep=2em, column sep=2.5em]
	R \arrow[rr,"g_1"] \arrow[dd,swap,"g_2"] &&
	R_1 \arrow[dd,"f_1"] \\
	&  \\
	R_2 \arrow[rr,swap, "f_2"] && R'
	\end{tikzcd}
	\]\label{fig1}
\end{figure}

has \textit{Milnor patching property for unimodular rows} if for every $r \geq 3$ and $v_1 \in Um_r(R_1)$ such that $f_1(v_1)\simm{R'}e_1,$ there exists a pullback $v \in R$ such that $g_1(v)\simm{R_1}v_1$. 

A \textit{Generalized Karoubi square} is a commutative square where $R_2 = S^{-1}R,$ $R' = g_1(S)^{-1}R_1$ and the map $g_1$ is an analytic epimorphism along $S \subset R$.

Gubeladze (\cite{Gubeladze3-MR1206629}$,$ Proposition 9.1) proved that a generalized Karoubi square has Milnor patching property for unimodular rows. This plays a vital role in patching of unimodular elements. 

A monoid $M$ is a set with an associative operation $M \times M \rightarrow M$ with unity. We will use  multiplicative notation for the operations in $M$. Monoids isomorphic to $\mathbb{Z}_+^r$ are called \textit{free} monoids of rank $r$. One can refer to (\cite{Gubeladze1-MR2508056}$,$ Chapter 2) for a detailed read. Given a ring $R,$ similar to that of a polynomial algebra$,$ we talk about the \textit{monoid algebra} $R[M],$ generated as a free $R$-module with basis as elements of $M$ and coefficients in $R$. By $R[d,m,n][M],$ we mean a monoid algebra with coefficients in a ring $A$ of type $R[d,m,n],$ i.e. $A[M]$.


\section{Cancellation of Projective Modules}

\begin{proposition}\label{p1}
	Let $A$ be a ring of type $R[d,m,n]$ and $M$ be a commutative cancellative monoid. Then $E_r(A[M])$ acts transitively on $Um_r(A[M]),$ if $r \geq \max \{3, d+2\}$.
\end{proposition}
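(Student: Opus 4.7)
The plan is to proceed by induction on $n$, the number of elements $f_i(l_i)$ inverted in the presentation of $A$. The overall strategy runs parallel to the polynomial-ring case treated in \cite{MKK&SAL-MR3153610}, with Gubeladze's Theorem~\ref{t1} replacing the classical unimodular-row result over polynomial algebras at the base step.

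For the base case $n=0$, we have $A=R[X_1,\ldots,X_m]$, and the identification
\[
A[M] \;=\; R[X_1,\ldots,X_m][M] \;\cong\; R\bigl[\mathbb{Z}_+^m \oplus M\bigr]
\]
exhibits $A[M]$ as a monoid algebra over $R$ for the commutative cancellative monoid $\mathbb{Z}_+^m \oplus M$ (the direct sum of two commutative cancellative monoids remains such). Since $\dim R = d$, Theorem~\ref{t1} applies and gives $E_r$-transitivity on $Um_r(A[M])$ under the hypothesis $r\geq \max\{3,d+2\}$.

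For the inductive step, write $A = A'[f_n(l_n)^{-1}]$ where $A'$ is of type $R[d,m,n-1]$, so by induction $E_r(A'[M])$ acts transitively on $Um_r(A'[M])$. Given $v\in Um_r(A[M])$, the aim is to realize $v$ as the image of some $\tilde v \in Um_r(A'[M])$ modulo the $E_r(A[M])$-action, for then the inductive hypothesis gives $\tilde v \simm{A'[M]} e_1$ and hence $v \simm{A[M]} e_1$. To accomplish this I plan to set up a generalized Karoubi square
\[
\begin{tikzcd}[row sep=1.8em, column sep=2.2em]
A'[M] \arrow[r] \arrow[d] & A[M] \arrow[d] \\
R_2 \arrow[r] & R'
\end{tikzcd}
\]
associated with the multiplicative subset $S \subset A'[M]$ generated by $f_n(l_n)$, where $R_2$ and $R'$ are the appropriate localization/completion pair that makes the top horizontal map an analytic epimorphism along $S$. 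Milnor patching for unimodular rows in such squares (Gubeladze \cite{Gubeladze3-MR1206629}, Proposition 9.1) then supplies the required lift.

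The technical heart of the argument, and the main obstacle, is verifying that $A'[M]\to A[M]$ is an analytic epimorphism along $S$. When $l_n$ is one of the indeterminates $X_{i_j}$, this can be arranged by adapting the techniques of Dhorajia--Keshari \cite{AMD&MKK-MR2564858} to a monoid coefficient ring; the monoid algebra structure poses no extra difficulty since $S$ lives in the polynomial part of $A'[M]$ and is unaffected by extension to $M$. When $l_n = \sum_j \alpha_{nj} X_j - r_n$ is a general $k$-linear form, one first applies a $k$-linear change of the $X_j$ variables to convert $l_n$ into an indeterminate; this substitution commutes with the monoid structure on $M$ and preserves the type $R[d,m,n-1]$ of $A'$, reducing to the previous case. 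It is here that the hypotheses $(\alpha_{n1},\ldots,\alpha_{nm})\in k^m\smallsetminus\{0\}$, $r_n\in R$, and $k\subset R$ are used in an essential way.
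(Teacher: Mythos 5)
Your base case and the overall plan (induction on $n$, Gubeladze's Theorem~\ref{t1} for $n=0$, patching for the inductive step) match the paper, but the inductive step as you have set it up has two genuine gaps. First, the multiplicative set is wrong: you take $S\subset A'[M]$ to be generated by $f_n(l_n)$ itself, but then $A'[M]\to A[M]=A'[M]_{f_n}$ cannot be an analytic epimorphism along $S$ (indeed $A'/f_nA'\neq 0$ while $A/f_nA=0$), and the square degenerates since $S^{-1}A'[M]$ would already equal $A[M]$. The correct choice, after reducing to $l_n=X_m$, is $S=1+f_nR[X_m]$: the localization $C[M]\to C[M]_{f_n}=A[M]$ (with $C$ of type $R[d,m,n-1]$) is an analytic epimorphism along any $s\in S$ precisely because $f_n$ becomes a unit modulo $s$, so the square with corners $C[M]$, $A[M]$, $C_s[M]$, $A_s[M]$ is a generalized Karoubi square. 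With your $S$ the verification you flag as ``the technical heart'' is not merely hard --- it is false.

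Second, and more importantly, you never verify the hypothesis of Milnor patching. Gubeladze's patching statement requires that the image of $v$ in the bottom-right corner be elementarily equivalent to $e_1$ \emph{before} a pullback to $C[M]$ exists; you cannot simply say the square ``supplies the required lift.'' The paper supplies this input by a second, separate use of the induction hypothesis: it observes that $A_S=R'[X_1,\ldots,X_{m-1},f_1(l_1)^{-1},\ldots,f_{n-1}(l_{n-1})^{-1}]$ is a ring of type $R'[d,m-1,n-1]$ over the $d$-dimensional ring $R'=R[X_m]_{f_nS}$, concludes $v\simm{A_S[M]}e_1$, and then picks a single $s\in S$ realizing this over $A_s[M]$. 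That dimension-preserving absorption of $X_m$ and $f_n^{-1}$ into a new coefficient ring $R'$ is the step that makes the whole induction close, and it is absent from your proposal. Your reduction of the second type of $l_n$ to an indeterminate by a $k$-linear change of variables followed by a translation is correct and agrees with the paper.
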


\begin{proof}
	We use induction on $n$. For $n=0,$ the result follows from Theorem $\ref{t1}$ by choosing the monoid as $M[X_1, \ldots,X_m] \simeq M \oplus \mathbb{Z}_+^m$.
	
	Assume $n>0$. For the ring $A$ of the first type where each $l_i$ is a variable $X_{i_j},$ we may assume that $l_n = X_m.$ Consider the multiplicative subset $S = 1+ f_nR[X_m]$ and write $A_S = R'[d,m-1,n-1] = R'[X_1, \ldots, X_{m-1}, {f_1(l_1)}^{-1}, \ldots, {f_{n-1}(l_{n-1})}^{-1}],$ where $R' =  R[X_m]_{f_nS}$. Let $a = (a_1, \ldots, a_r) \in Um_r(A[M])$. As $dim R' = d,$ by induction on $n,$  $a\simm{ A_S[M]}e_1 = (1, 0, \ldots, 0).$ Choose an $s \in S$ such that there exists  $\sigma' \in E_r(A_s[M])$ with $\sigma'(a) = e_1$. 
	
	Consider the following fibre product diagram
	\begin{equation*}
	\begin{tikzcd}
	C[M] \ar{r}{p} \ar{d}{} & A[M] \ar{d}{} \\ 
	C_s[M] \ar{r}{} & A_s[M]
	\end{tikzcd}
	\end{equation*}
	where $C =  R[X_1, \ldots, X_{m}, {f_1(l_1)}^{-1}, \ldots, {f_{n-1}(l_{n-1})}^{-1}]$ is of type  $R[d,m,n-1]$ and $A=C_{f_n}$. As the diagram above has Milnor patching property for unimodular rows$,$ there exists a $c \in Um_r(C[M])$ such that $p(c) \simm{A[M]}a$. By induction on $n,$ $c\simm{C[M]}e_1$ and hence their respective images $a\simm{A[M]}e_1$.
	
	The proof for the ring of the second type follows through if the following reduction is considered. Let $l_n = \sum k_iX_i - r,$ where $k_i \in k$ are not all zero and $r \in R$. Choose a $\phi \in E_m(k),$ such that $(k_1, \ldots, k_m) = (0, \ldots, 0, 1)\phi $. By changing the variables $(X_1,\ldots, X_m)$ to $\phi(X_1,\ldots,X_m),$ we can assume $l_n = X_m + r$. Again transforming the variable $X_m$ using the translation $X_m\mapsto X_m - r,$ we may assume that $l_n = X_m$. Use the  above arguments to complete the proof.
\end{proof}	

Following is a direct consequence of the above result$,$ proof of which follows in spirit to that of (\cite{AMD&MKK-MR2564858}$,$ Theorem 3.8)

\begin{corollary}\label{c2}
	Let $A$ be a ring of type $R[d,m,n]$ and $M$ be a commutative cancellative monoid. Then the canonical map $\phi_r: GL_r(A[M])/E_r(A[M]) \rightarrow K_1(A[M])$ is surjective for $r \geq \{2, dimR +1\}.$
\end{corollary}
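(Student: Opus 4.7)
The plan is to deduce surjectivity of $\phi_r$ from transitivity of the $E_{r+1}$-action on $Um_{r+1}(A[M])$ via the classical Bass-type argument. The goal translates into showing that every matrix in $GL_{r+1}(A[M])$ is equivalent, modulo $E_{r+1}(A[M])$, to a stably extended matrix from $GL_r(A[M])$, so that the stabilisation map $GL_r(A[M])/E_r(A[M]) \to GL_{r+1}(A[M])/E_{r+1}(A[M])$ is surjective. Iterating this and passing to the colimit then yields surjectivity of $\phi_r$ onto $K_1(A[M])$.

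First I would fix $r \geq \max\{2,\dim R + 1\}$, so that $r+1 \geq \max\{3, d+2\}$ and Proposition \ref{p1} applies at the level of $(r+1)$-rows. Given $\alpha \in GL_{r+1}(A[M])$, the first column $v$ of $\alpha$ lies in $Um_{r+1}(A[M])$. By Proposition \ref{p1}, there exists $\varepsilon_1 \in E_{r+1}(A[M])$ with $\varepsilon_1 v = e_1$. Consequently $\varepsilon_1 \alpha$ has the block form
\[
\varepsilon_1 \alpha \;=\; \begin{pmatrix} 1 & w \\ 0 & \beta \end{pmatrix}
\]
for some row $w \in A[M]^{r}$ and some $\beta \in GL_r(A[M])$.

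Next I would clean up the first row by right multiplication with a product $\varepsilon_2 \in E_{r+1}(A[M])$ of elementary matrices $E_{1j}(-w_{j-1})$, which does not disturb the first column or the lower-right block. This yields
\[
\varepsilon_1 \alpha \varepsilon_2 \;=\; \begin{pmatrix} 1 & 0 \\ 0 & \beta \end{pmatrix},
\]
i.e.\ $\alpha$ and the image of $\beta$ under the standard embedding $GL_r(A[M]) \hookrightarrow GL_{r+1}(A[M])$ represent the same class in $GL_{r+1}(A[M])/E_{r+1}(A[M])$. Since $K_1(A[M]) = \varinjlim GL_s(A[M])/E_s(A[M])$ and the colimit maps factor through any single stage, repeating this reduction shows that every class in $K_1(A[M])$ already comes from $GL_r(A[M])/E_r(A[M])$, which is exactly the surjectivity of $\phi_r$.

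The only genuinely nontrivial ingredient here is the transitivity of $E_{r+1}(A[M])$ on $Um_{r+1}(A[M])$, which has already been established in Proposition \ref{p1}; after that the argument is purely formal block-matrix manipulation, so no additional obstacle is expected. The hypothesis $r \geq \max\{2, \dim R + 1\}$ is sharp precisely because Proposition \ref{p1} needs $r+1 \geq \max\{3, d+2\}$ to perform the first column reduction.
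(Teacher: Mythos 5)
Your proof is correct and is essentially the argument the paper intends: the paper defers to the proof of Theorem 3.8 of Dhorajia--Keshari, which is exactly this Bass-type stabilization — use Proposition \ref{p1} (valid since $s+1 \geq r+1 \geq \max\{3,d+2\}$ at every stage $s\geq r$) to bring the first column of a matrix in $GL_{s+1}(A[M])$ to $e_1$, clear the first row by elementary column operations, and conclude that each map $GL_s/E_s \to GL_{s+1}/E_{s+1}$ is surjective, hence so is the map to the colimit $K_1(A[M])$. No gaps.
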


The following corollary follows directly from Lemma \ref{l2} by utilizing Proposition \ref{p1}.

\begin{corollary}\label{c1}
	Let $A$ be a ring of type $R[d,m,n]$ and $M$ be a commutative cancellative torsion-free seminormal monoid. Let $P$ be a projective $A[M]$-module of rank $ r$. Then there exists $s\in R$
	satisfying the properties of Lemma \ref{l1}.
\end{corollary}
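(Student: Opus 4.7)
My plan is to prove the corollary by extending Lemma \ref{l1} from the $R[d,m,n]$ setting to the monoid-algebra $A[M]$ setting, keeping the denominator $s$ inside $R$. The key idea is to first locate $s \in R$ such that $P_s$ is free over $A_s[M]$, and then to extract the data $(p_i, \phi_i)$ from a free basis of $P_s$ by clearing denominators, as in the classical Lindel–Dhorajia–Keshari argument. Proposition \ref{p1} enters as the unimodular-row tool that lets me normalize this data over $A[M]$, and the seminormality of $M$ is what ultimately allows the descent that puts $s$ into $R$.

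The concrete steps I envisage are the following. First, I would use a generic freeness argument: at any minimal prime $\mathfrak{p}$ of $R$, the localization of $P$ becomes projective over a monoid algebra whose coefficient ring is essentially a field, and for seminormal cancellative torsion-free monoids $M$ such projective modules are free by Gubeladze-Swan type results; a standard spreading-out argument then yields an $s \notin \mathfrak{p}$ in $R$ such that $P_s$ is free of rank $r$ over $A_s[M]$. Second, picking a free basis $\bar{p}_1, \ldots, \bar{p}_r$ of $P_s$ and the dual basis $\bar{\phi}_1, \ldots, \bar{\phi}_r$ of $P_s^*$, I clear denominators to land in $P$ and in $P^* = \mathrm{Hom}_{A[M]}(P, A[M])$ respectively, obtaining $p_i \in P$ and $\phi_i \in P^*$. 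A direct computation then yields $(\phi_i(p_j)) = \mathrm{diag}(s^N, \ldots, s^N)$ for some $N$; replacing $s$ by $s^N$ in $R$ gives property (2), and the inclusion $sP \subset \sum p_i A[M]$ of property (3) follows from the freeness at $s$ together with the scaling. Finally, properties (4) and (5) depend only on $R$ and can be arranged by replacing $s$ with a further power, exactly as in the argument of Lindel (\cite{Lindel-MR1322406}, Lemma 1.1).

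The main obstacle is the first step: guaranteeing that the denominator making $P$ locally free can be taken in $R$, not just in $A[M]$. This is precisely where seminormality of $M$ is essential; without it, the fiber of $P$ at a minimal prime of $R$ need not be free over the corresponding field-monoid-algebra, and one cannot spread a local basis to a principal open subset of $\mathrm{Spec}\,R$. Proposition \ref{p1} plays a supporting but crucial role whenever the normalization of the tuple $(p_i, \phi_i)$ requires aligning a unimodular row of length $\geq \max\{3, d+2\}$ over $A[M]$, since its transitivity statement provides the elementary freedom needed to realize the diagonal form. Once $s$ has been located and the data normalized, verifying the five conditions of Lemma \ref{l1} is routine.
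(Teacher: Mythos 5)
Your proposal takes essentially the same route as the paper: pass to the total quotient ring of (reduced) $R$, which is a product of fields, prove freeness of the localized module over the resulting monoid algebra, spread out to obtain $s\in R$ with $P_s$ free, and then get the remaining properties of Lemma \ref{l1} by the standard Lindel/Dhorajia--Keshari denominator-clearing (which the paper leaves implicit behind the phrase ``it is enough to show that $P_s$ is free''). Two points deserve correction, though neither is fatal. First, after inverting the non-zerodivisors of $R$ the coefficient ring is $S^{-1}A$, an overring of a polynomial ring over a field rather than a field itself, so the freeness you need is Gabber's theorem (projective $S^{-1}A$-modules are free) combined with Swan's extension theorem (projective $S^{-1}A[M]$-modules are extended from $S^{-1}A$, which is where seminormality and torsion-freeness of $M$ enter); a ``Gubeladze-type result over a field'' alone does not cover this coefficient ring. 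Second, Proposition \ref{p1} plays no role here: neither locating $s$ nor normalizing the tuple $(p_i,\phi_i)$ requires any unimodular-row transitivity, and the paper's proof of this corollary does not invoke it.
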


\begin{proof}
	It is enough to show that there exists an $s\in R$ such that $P_s$ is free. We can assume that $R$ is reduced.
	Let $S$ be the set of non-zerodivisors of $R$. Then $S^{-1}R$ is a direct product of fields. Hence without loss of generality$,$ we may assume that $S^{-1}R$ is a field. By Gabber (\cite{Gabber-MR1891173}$,$ Theorem 2.1)$,$ all projective $S^{-1}A$ modules are free. By Swan (\cite{Rao-MR955292}$,$ Corollary 1.3)$,$ projective modules over $S^{-1}A[M]$ are extended from $S^{-1}A,$ hence are free. Thus we can choose an $s\in R$ such that $P_s$ is free. 
\end{proof}

\begin{theorem}\label{t4}
	Let $A$ be a ring of type $R[d,m,n]$ and $M$ be a commutative cancellative torsion-free seminormal monoid. Let $P$ be a projective $A[M]$-module of rank $r \geq \max \{2, d+1\}$. Then $E(A[M] \oplus P)$ acts transitively on $Um(A[M] \oplus P)$. In particular$,$ P is cancellative.
\end{theorem}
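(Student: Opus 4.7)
The plan is to adapt the proof from the free-monoid case of Keshari--Lokhande, using Proposition \ref{p1} and Corollary \ref{c1} as the monoid-algebra replacements for the corresponding ingredients in the free case. Given $(a, p) \in Um(A[M] \oplus P)$, we aim to produce $\varepsilon \in E(A[M] \oplus P)$ with $\varepsilon(a, p) = (1, 0)$; the cancellation statement for $P$ then follows from Bhatwadekar's criterion recalled in the introduction.

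By Corollary \ref{c1}, choose $s \in R$ such that $P_s$ is free of rank $r$ and $s$ satisfies all the properties of Lemma \ref{l1}. Since $A_s$ is of type $R_s[d', m, n]$ with $d' \leq d$, and $r + 1 \geq \max\{3, d+2\}$, Proposition \ref{p1} gives that $E_{r+1}(A[M]_s)$ acts transitively on $Um_{r+1}(A[M]_s)$. Using the identification $A[M]_s \oplus P_s \cong A[M]_s^{r+1}$, we obtain $\sigma \in E_{r+1}(A[M]_s)$ sending the localized $(a, p)$ to $e_1$. A standard dilation / Quillen-patching argument, exploiting the splitting data $p_1, \ldots, p_r$ and $\phi_1, \ldots, \phi_r$ from Lemma \ref{l1}(2)--(3) to clear the denominators $s^N$ appearing in $\sigma$, then lets us modify $(a, p)$ by an element of $E(A[M] \oplus P)$ so that the new pair lies in $Um(A[M] \oplus P, s^2 A[M])$.

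To invoke Lemma \ref{l2} and finish, we must check that $E_{r+1}(B')$ acts transitively on $Um_{r+1}(B')$, where $B' = A[M][X]/(X^2 - s^2 X)$. The evaluations $X \mapsto 0$ and $X \mapsto s^2$ place $B'$ in a cartesian Milnor square patching two copies of $A[M]$ over $A[M]/s^2 A[M] = (A/s^2 A)[M]$. Proposition \ref{p1} applied to $A$ itself handles transitivity on $Um_{r+1}(A[M])$. For the bottom vertex, Lemma \ref{l1}(4) makes $s$ a nonzerodivisor modulo the nilradical, so $\dim(R/s^2 R) \leq d - 1$ and $A/s^2 A$ is of type $(R/s^2 R)[d-1, m, n]$; Proposition \ref{p1} applies again. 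The Milnor patching property for unimodular rows, recalled after the fibre-product diagram in Section 2, then transfers transitivity to $B'$, and Lemma \ref{l2} yields the desired $\varepsilon$.

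The main technical obstacle is the dilation step: replacing the $s$-local elementary transformation $\sigma$ by a genuine element of $E(A[M] \oplus P)$ that agrees with $\sigma$ modulo the chosen power of $s$. Because $P$ is only projective and need not be free over $A[M]$, this cannot be carried out by simply lifting matrix entries; instead, one must manufacture transvections of $A[M] \oplus P$ out of the half-frame $(p_i, \phi_i)$ of Lemma \ref{l1}, arranged so that on localization at $s$ they absorb the denominators of the elementary generators of $E_{r+1}(A[M]_s)$. Lemma \ref{l1}(5) is used here to rule out low-order obstructions.
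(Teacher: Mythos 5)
Your endgame is right: once $(a,p)$ is brought into $Um(A[M]\oplus P,\,s^2A[M])$, Lemma \ref{l2} finishes the proof, and its hypothesis on $B'=A[M][X]/(X^2-s^2X)$ does hold --- though more directly than via your Milnor square, since $B'=\bigl(A[X]/(X^2-s^2X)\bigr)[M]$ and $A[X]/(X^2-s^2X)$ is itself of type $R''[d,m,n]$ with $R''=R[X]/(X^2-s^2X)$ finite over $R$, so Proposition \ref{p1} applies to $B'$ outright. The genuine gap is in how you propose to reach the congruence subset in the first place. You localize at $s$, apply Proposition \ref{p1} over $A[M]_s$ to get $\sigma\in E_{r+1}(A[M]_s)$ with $\sigma(a,p)_s=e_1$, and then invoke a ``standard dilation / Quillen-patching argument'' to descend. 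But there is no polynomial parameter here to dilate: the Quillen--Suslin dilation lemma turns $\sigma(X)\in E(A_s[X])$ with $\sigma(0)=\mathrm{id}$ into $\sigma(s^NX)$ defined over $A[X]$, and transitivity of $E_{r+1}(A[M]_s)$ by itself gives no element of $E(A[M]\oplus P)$ and no control modulo $s^2$. Your closing paragraph concedes that this is ``the main technical obstacle'' without resolving it; that unresolved step is precisely the content that has to be supplied, and patching the $s$-local information against information modulo powers of $s$ is exactly what Lemma \ref{l2} already encapsulates --- so you would be re-proving it in the wrong place and for the wrong purpose.

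The paper's route avoids the descent problem entirely by inducting on $d$. Take $A$ reduced with connected spectrum; for $d=0$ the module $P$ is free by Corollary \ref{c1} and Proposition \ref{p1} concludes (this base case is absent from your write-up). For $d>0$, since $s$ is a non-zerodivisor of $R$ by Lemma \ref{l1}(4), the ring $A/s^2A$ is of type $(R/s^2R)[d-1,m,n]$ and $r\geq\max\{2,(d-1)+1\}$ still holds, so the theorem applied to the quotient gives $(\bar a,\bar p)\sim(1,0)$ over $\overline{A[M]}\oplus\bar P$ where bar denotes reduction modulo $s^2A[M]$; transvections lift along the surjection $E(A[M]\oplus P)\to E(\overline{A[M]}\oplus\bar P)$, so one may assume $(a,p)\in Um(A[M]\oplus P,\,s^2A[M])$ and then quote Lemma \ref{l2}. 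If you want to keep your localization idea, you must replace the appeal to ``dilation'' by this induction on $d$ (or prove an honest descent lemma relating $E(A[M]\oplus P)$, $A[M]_s$ and $A[M]/s^NA[M]$, which amounts to the same work).
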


\begin{proof} 
	Without loss of generality$,$ we can assume $A$ is reduced with connected spectrum. If $d=0,$ then  $P$ is free by Corollary \ref{c1} and the result follows from Proposition \ref{p1}. 
	
	Assume $d>0$. Let $(a,p) \in Um(A[M] \oplus P)$. By Corollary \ref{c1}$,$ choose a non-zerodivisor $s\in R$ satisfying the hypothesis of the Lemma \ref{l1} and let $``-"$ denote reduction modulo $s^2A[M]$. Then by induction on $d,$ $(\bar{a}, \bar{p})\simm{A[M]/s^2A[M]}(1,0)$. Since an element of $E(\overline {A[M]} \oplus \bar {P})$ can be lifted to an element of $E(A[M]\oplus P),$ we may assume that $(a,p) \in Um(A[M] \oplus P, s^2A[M])$. By Lemma \ref{l2}$,$ $(a, p)\simm{A[M] \oplus P}(1,0)$ and hence $E(A[M] \oplus P)$ acts transitively on $Um(A[M] \oplus P)$. 
\end{proof}


\section{Generalization of Swan's result}

Using techniques similar to that of the previous section$,$ the following can be derived.

\begin{proposition}\label{p2}
	Let $R$ be a UFD of dimension $1,$ $M$ be a commutative cancellative torsion-free seminormal monoid and $A = R[1,m,n]$. Then all projective $A[M]$-modules are free.
\end{proposition}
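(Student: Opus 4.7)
The plan is to follow the inductive scheme of Proposition \ref{p1}, but patching projective modules via Zariski descent in place of patching unimodular rows via Karoubi squares.

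For the base case $n=0$, $A=R[X_1,\ldots,X_m]$ and $A[M]\simeq R[\mathbb{Z}_+^m\oplus M]$. A Noetherian UFD of dimension one is either a field or a PID, and the monoid $\mathbb{Z}_+^m\oplus M$ is commutative, cancellative, torsion-free and seminormal; hence Gubeladze's theorem for monoid algebras over PIDs gives that every projective $A[M]$-module is free.

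For the inductive step, assume the result for $n-1$. As in Proposition \ref{p1}, a linear change of variables reduces the second type of $l_n$ to the first, so we may take $l_n=X_m$. Put $C=R[X_1,\ldots,X_m,f_1(l_1)^{-1},\ldots,f_{n-1}(l_{n-1})^{-1}]$, a ring of type $R[1,m,n-1]$ with $A=C_{f_n}$, and let $P$ be a projective $A[M]$-module. Since $R$ is a domain and $M$ is cancellative torsion-free, $A[M]$ is a domain and $P$ has a well-defined rank $r$. Put $S=1+f_nR[X_m]$; then $A_S=R'[1,m-1,n-1]$ where $R'=R[X_m]_{f_nS}$ is a UFD (localization of a UFD) of dimension one, by the same Lindel-type estimate that was used for $\dim R'=d$ in Proposition \ref{p1}. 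The inductive hypothesis applied to $A_S[M]$ shows that $P_S$ is free, so there is $s\in S$ with $P_s\simeq A_s[M]^r$.

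Since $s=1+f_ng$ for some $g\in R[X_m]\subseteq C$, the elements $s$ and $f_n$ are comaximal in $C$, and the fibre product square
\begin{equation*}
\begin{tikzcd}
C[M] \ar[r] \ar[d] & C_{f_n}[M]=A[M] \ar[d] \\
C_s[M] \ar[r] & C_{sf_n}[M]=A_s[M]
\end{tikzcd}
\end{equation*}
is a Zariski patching square. Patching the triple $(P,\, C_s[M]^r,\, P_s\simeq A_s[M]^r)$ produces a projective $C[M]$-module $P'$ of rank $r$ whose localizations at $f_n$ and $s$ recover $P$ and $C_s[M]^r$ respectively. Since $C$ is of type $R[1,m,n-1]$, the inductive hypothesis forces $P'\simeq C[M]^r$, whence $P\simeq P'_{f_n}\simeq A[M]^r$ is free. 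The main obstacle I expect is the careful verification that the intermediate ring $R'=R[X_m]_{f_nS}$ is again a one-dimensional UFD so that the induction can legitimately be invoked on $A_S[M]$; once this is in place the rest is a routine Zariski-descent patching made possible by the comaximality of $s$ and $f_n$ in $C$.
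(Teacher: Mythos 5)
Your proposal is correct and follows essentially the same route as the paper: induction on $n$, the base case via Gubeladze's theorem for $R[\mathbb{Z}_+^m\oplus M]$, localization at $S=1+f_nR[X_m]$ to drop to $R'[1,m-1,n-1]$, and then patching over the square relating $C[M]$, $A[M]=C_{f_n}[M]$, $C_s[M]$ and $A_s[M]$ to conclude $P$ is extended from $C[M]$ and hence free by induction. The only difference is presentational: you justify the patching as Zariski gluing via the comaximality of $s=1+f_ng$ and $f_n$, while the paper simply invokes Milnor patching for the same square; both are valid here.
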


\begin{proof}
	Let $P$ be a projective $A[M]$-module. We will induct on $n$. If $n = 0,$ then $A[M] = A[M \oplus \mathbb{Z}_+^m]$. By \cite{Gubeladze2-MR937805}$,$ $P$ is free.
	
	Assume $n>0$. First assume that $A$ is of the type where all $l_i$ are variables. We can assume $l_n = X_m$. Consider the multiplicative subset $S = 1+ f_nR[X_m]$ and rewrite $A_S = R'[1, m-1, n-1] = R'[X_1, \ldots, X_{m-1}, {f_1(l_1)}^{-1}, \ldots, {f_{n-1}(l_{n-1})}^{-1}],$ where $R' =  R[X_m]_{f_nS}$ is a $1$-dimensional UFD. By induction on $n,$ $P_S$ is free$,$ choose a $g \in R[X_m]$ such that $P_{1+f_ng}$ is free.
	
	Let $C=R[X_1, \ldots, X_{m}, {f_1(l_1)}^{-1}, \ldots, {f_{n-1}(l_{n-1})}^{-1}]$ be the subring of $A$ of type $R[1,m,n-1]$. Then $A=C_{f_n}$.  By Milnor patching$,$ we get $P$ is extended from  $C[M]$. By induction on $n,$ projective $C[M]$-modules are free. Therefore$,$ $P$ is free. 
	
	The proof when $l_i$ are of the second type follows in a similar fashion to that of Proposition \ref{p1}.   
\end{proof}	

Swan's criterion for a non local ring $R$ can be condensed and simply put as: A commutative domain $R$ is an element of the collection $\mathcal{R}_n$ if all projective modules of rank $n$ over $R_\mathfrak{m} [x,x^{-1}]$ and $R(t)_\mathfrak{n}[x, x^{-1}]$ are free$,$ where $\mathfrak{m} \in max(R)$ and  $\mathfrak{n} \in max(R(t))$. The following theorem due to Popescu (\cite{Popescu-MR986438}$,$ Theorem 3.1) helps us visualize this collection better using Theorem \ref{t6}. A ring $R$ is \textit{essentially of finite type} over a ring $S,$ if $R$ is the localization of an affine $S$-algebra $T$ at a multiplicatively closed subset of $T$.

\begin{theorem}\label{t5}
	Let $R$ be a regular local ring containing a field $k $. Then $R$ is a filtered inductive limit of regular local rings essentially of finite type over $\mathbb{Z}$.
\end{theorem}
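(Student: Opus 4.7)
The plan is to recognize this as an instance of Popescu's general Néron desingularization theorem. Let $k_0 \subset R$ be the prime field, either $\mathbb{Q}$ or $\mathbb{F}_p$. Since $R$ is a regular Noetherian ring and $k_0$ is perfect, the structure map $k_0 \to R$ is a regular morphism: flatness is automatic, and perfectness of $k_0$ upgrades the regularity of each fibre to geometric regularity. Popescu's theorem therefore yields a presentation $R = \varinjlim_\lambda B_\lambda$ as a filtered colimit of smooth finitely presented $k_0$-algebras, which are in particular smooth finitely presented $\mathbb{Z}$-algebras. Localising each $B_\lambda$ at the contraction $\mathfrak{p}_\lambda := \mathfrak{m} \cap B_\lambda$ of the maximal ideal $\mathfrak{m} \subset R$ produces regular local rings essentially of finite type over $\mathbb{Z}$ whose filtered colimit is $R_{\mathfrak{m}} = R$, giving the statement.

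To carry this out, I would first present $R$ as the filtered union $\bigcup_\lambda A_\lambda$ of its finitely generated $\mathbb{Z}$-subalgebras; this step is formal but the $A_\lambda$ are typically singular. The substantive work is to refine this presentation by smooth algebras: for each $\lambda$ one must produce a factorisation $A_\lambda \hookrightarrow B_\lambda \to R$ with $B_\lambda$ smooth and finitely presented over $\mathbb{Z}$, compatibly in $\lambda$ so that the new filtered colimit is still $R$. Once the $B_\lambda$ are in hand, contracting $\mathfrak{m}$ to each $B_\lambda$ and localising is routine and delivers the regular local rings essentially of finite type over $\mathbb{Z}$ demanded by the statement.

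The genuine obstacle --- and the entire mathematical content of Popescu's theorem --- is the construction of the smooth intermediate algebras $B_\lambda$. Popescu's strategy, extending N\'eron's original desingularization over discrete valuation rings, proceeds by induction on the height of the non-smoothness locus cut out by the Jacobian ideal of $A_\lambda$ over $k_0$: each inductive step eliminates part of the singular locus via an explicit blow-up-type construction and invokes Artin--Rotthaus-style approximation to lift formal solutions of the defining equations to honest elements of $R$. This is the deep part of the argument; as with the use of Popescu's theorem in the paper, one simply cites \cite{Popescu-MR986438} rather than reproducing the full desingularization proof.
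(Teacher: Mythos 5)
The paper gives no proof of this statement at all --- it is quoted directly as Theorem 3.1 of \cite{Popescu-MR986438} --- so your reduction to the general N\'eron--Popescu desingularization theorem (the prime field $k_0$ is perfect, hence $k_0\to R$ is a regular morphism, hence $R$ is a filtered colimit of smooth finitely presented $k_0$-algebras $B_\lambda$, which one then localizes at the contractions of $\mathfrak m$) is essentially the same approach with the derivation spelled out, and it is correct. One phrase is off: a smooth finitely presented $\mathbb{Q}$-algebra is not a finitely presented $\mathbb{Z}$-algebra (and a finitely presented $\mathbb{F}_p$-algebra is never smooth over $\mathbb{Z}$, being non-flat); what you actually need, and what your final localization step delivers, is only that each $(B_\lambda)_{\mathfrak p_\lambda}$ is \emph{essentially} of finite type over $\mathbb{Z}$, which holds because $k_0$ is itself a localization or a quotient of $\mathbb{Z}$.
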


\begin{theorem}\label{t8}
	The class of regular domains containing a field belongs to $\mathcal{R}_n$ for all $n>0$.  
\end{theorem}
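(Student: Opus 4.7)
The plan is to verify Swan's two-condition criterion for $\mathcal R_n$ stated just before Theorem \ref{t5}. For a regular domain $R$ containing a field $k$, it suffices to show that every rank-$n$ projective module over $R_\mathfrak m[x,x^{-1}]$ and over $R(t)_\mathfrak n[x,x^{-1}]$ is free, for all $\mathfrak m\in\max(R)$ and $\mathfrak n\in\max(R(t))$. Since both $R_\mathfrak m$ and $R(t)_\mathfrak n$ are regular local rings containing the field $k$, the entire problem reduces to the following key assertion: for any regular local ring $A$ containing a field, every rank-$n$ projective $A[x,x^{-1}]$-module is free.

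To prove this assertion I would apply Popescu's theorem (Theorem \ref{t5}) to write $A=\varinjlim A_\lambda$ as a filtered inductive limit of regular local rings essentially of finite type over $\mathbb Z$, with local transition maps, and then reduce to Swan's theorem that localizations of regular affine algebras over a field lie in $\mathcal R_n$. This reduction splits by characteristic. If $\operatorname{char} A=p>0$, then cofinally each $A_\lambda$ has characteristic $p$ and is therefore essentially of finite type over $\mathbb F_p$. If $\operatorname{char} A=0$, then $A\supset\mathbb Q$, and I would replace $A_\lambda$ by $B_\lambda:=(A_\lambda\otimes_{\mathbb Z}\mathbb Q)_{\mathfrak q_\lambda}$, where $\mathfrak q_\lambda$ is the contraction of $\mathfrak m_A$ under the canonical map $A_\lambda\otimes_{\mathbb Z}\mathbb Q\to A$; a direct verification shows that $\{B_\lambda\}$ is a filtered system of regular local rings essentially of finite type over $\mathbb Q$ whose colimit is again $A$. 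In either case, every ring in the reduced system is a local localization of a regular affine algebra over a field, so by Swan's result it lies in $\mathcal R_n$, and condition (3) of the definition of $\mathcal R_n$ then gives that rank-$n$ projectives over the corresponding Laurent extension are free.

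To conclude, a finitely generated projective $P$ over $A[x,x^{-1}]=\varinjlim A_\lambda[x,x^{-1}]$ is finitely presented, hence descends to a finitely presented module $P_{\lambda_0}$ over some $A_{\lambda_0}[x,x^{-1}]$. Standard filtered-colimit arguments show that, after enlarging $\lambda_0$, the module $P_{\lambda_0}$ becomes projective of rank $n$; by the previous step it is free, and extending scalars shows $P$ is free. Applying this to both $R_\mathfrak m$ and $R(t)_\mathfrak n$ closes the argument.

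The main obstacle I anticipate is the characteristic-zero reduction: Popescu's theorem supplies intermediate rings only essentially of finite type over $\mathbb Z$, and such a ring need not contain any field (consider $\mathbb Z_{(p)}$). Naively inverting all nonzero integers destroys locality, which is essential for invoking condition (3) of $\mathcal R_n$. The localization $(A_\lambda\otimes_{\mathbb Z}\mathbb Q)_{\mathfrak q_\lambda}$ is tailored to restore locality while moving each intermediate ring into the class covered by Swan's theorem; checking that these modified rings still assemble into a filtered system with colimit $A$ through local homomorphisms, and that projectivity and rank descend properly along the resulting system, is the technical heart of the proof.
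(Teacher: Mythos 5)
Your proposal follows essentially the same route as the paper: reduce via Swan's criterion to showing that rank-$n$ projectives over $A[x,x^{-1}]$ are free for $A$ a regular local ring containing a field, then combine Popescu's theorem (Theorem \ref{t5}) with Swan's result that localizations of regular affine algebras over fields lie in $\mathcal{R}_n$. You are in fact more careful than the paper at the one delicate point --- the paper passes from ``regular $\mathbb{Z}$-spot'' to ``regular spot over the prime subfield of $k$'' without comment, whereas you explicitly carry out the characteristic-$p$ cofinality argument and the characteristic-zero reduction by tensoring with $\mathbb{Q}$ and relocalizing, together with the descent of projectivity along the filtered colimit --- so your argument is correct and, if anything, supplies details the paper elides.
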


\begin{proof}
	It is enough to show that if $R$ is a regular local ring containing a field $k,$ then projective $R[X,X^{-1}]$-modules are free. If $P$ is a projective $R[X,X^{-1}]$-module$,$ then by Theorem \ref{t5}$,$ we may assume that $R$ is a regular $\mathbb{Z}$-spot and in particular a regular spot over the prime subfield of $k$. By Swan's result \cite{Swan-MR1144038}$,$ $P$ is free.
\end{proof}	

The following theorem can be found in (\cite{Swan-MR1144038}$,$ Theorem 1.2)

\begin{theorem}\label{t-3}
	Let $R \in \mathcal{R}_n$ and $M$ be a commutative cancellative torsion-free seminormal monoid with $U$ as its group of units. Then the following is a patching diagram
	\begin{figure}[h]
		\[
		\begin{tikzcd}[row sep=2em, column sep=2.5em]
		\mathcal{P}_n(R) \arrow[rr,""] \arrow[dd,swap,""] &&
		\mathcal{P}_n(R[M]) \arrow[dd,""] \\
		&  \\
		\mathcal{P}_n(R) \arrow[rr,swap, ""] && \mathcal{P}_n(RU)
		\end{tikzcd}
		\]\label{fig2}
	\end{figure}
\end{theorem}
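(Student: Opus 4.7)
The plan is to follow Swan's approach: reduce in stages from $R[M]$ to settings where one can invoke the defining properties of the class $\mathcal{R}_n$ directly. Here "patching diagram" is to be read in the sense of \cite{Swan-MR1144038}, i.e.\ a projective $R[M]$-module of rank $n$ is extended from $R$ precisely when its base change to $R[U]$ is extended from $R$, and isomorphism classes over $R[M]$ are determined by their images in $\mathcal{P}_n(R)$ and $\mathcal{P}_n(R[U])$ agreeing over $R$.

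First I would reduce to the local case. Given $P \in \mathcal{P}_n(R[M])$ whose image in $\mathcal{P}_n(R[U])$ is extended from $R$, the first defining axiom of $\mathcal{R}_n$ ensures that $R_{\mathfrak m} \in \mathcal{R}_n$ for every $\mathfrak m \in \max(R)$. A Quillen-style patching statement for monoid algebras (available under our hypotheses on $R$ and $M$) then lets me work under the assumption that $R$ is local. Second, I would use seminormality of $M$ to write $M \cong U \oplus N$ with $N$ seminormal and containing no non-trivial units, so that $R[M] = R[U][N]$; the problem becomes: any $P \in \mathcal{P}_n(R[U][N])$ whose image under the augmentation induced by $N \to \{1\}$ is free must itself be free.

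The inductive heart of the argument is to filter $N$ as a directed union $\{1\} = N_0 \subset N_1 \subset N_2 \subset \cdots$ of finitely generated seminormal sub-monoids such that each one-step extension $R[U][N_{i-1}] \hookrightarrow R[U][N_i]$ sits inside a generalized Karoubi square of rings. By Gubeladze (\cite{Gubeladze3-MR1206629}, Proposition~9.1)---recalled in the preliminaries of the present paper---each such square enjoys the Milnor patching property for unimodular rows, hence for projective modules of rank $n$. Condition~(3) in the definition of $\mathcal{R}_n$ (freeness of projective modules of rank $n$ over $R[X,X^{-1}]$ for local $R$) supplies the base case needed to transport freeness across each Karoubi step. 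An induction on the filtration, followed by passage to the filtered colimit, yields the asserted patching property for the full square.

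The principal obstacle is the combinatorial step of producing the filtration $\{N_i\}$ with Karoubi-compatible single-generator extensions; this rests on the pyramidal decomposition of seminormal affine monoids developed by Gubeladze and systematized by Swan, and it is precisely where seminormality of $M$ is indispensable (non-seminormal monoids obstruct the required pullback squares). Once the filtration is in hand, the ring-theoretic patching and the reduction to a local base are essentially formal consequences of the Milnor patching property and the axioms of $\mathcal{R}_n$.
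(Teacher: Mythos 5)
The first thing to note is that the paper does not prove this statement at all: Theorem \ref{t-3} is imported verbatim as (\cite{Swan-MR1144038}, Theorem 1.2), and the only use the paper makes of it is the consequence spelled out in the proof of Theorem \ref{t6}, namely that if projective $B[U]$-modules of rank $n$ are extended then so are projective $B[M]$-modules. So there is no in-paper proof to compare yours against; what you have written is a sketch of Swan's own argument.

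Judged on its own terms, your sketch correctly identifies the main ingredients (reduction to the local case, splitting off the unit group, generalized Karoubi squares with the Milnor patching property, and conditions (2) and (3) in the definition of $\mathcal{R}_n$ as the base of the induction), but it has a genuine gap exactly where you locate ``the principal obstacle'': the existence of a filtration of $N$ by finitely generated seminormal submonoids whose successive extensions fit into generalized Karoubi squares is asserted, not constructed, and constructing it is essentially the entire content of Swan's paper (and of Gubeladze's original argument). Moreover, the actual structure of that argument is not a linear filtration $N_0 \subset N_1 \subset \cdots$ with one Karoubi square per step: it is a double induction over a pyramidal decomposition of the cone of $M$, in which one passes to $R(t)$ (this is precisely why axiom (2) of $\mathcal{R}_n$ is needed, which your sketch never uses), localizes the monoid ring at suitable elements, and compares $R[M]$ with monoid rings of smaller monoids via Karoubi squares; seminormality enters there, not in the splitting $M \cong U \oplus N$, which for finitely generated $M$ follows from torsion-freeness alone (the general case being a filtered colimit). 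Two smaller inaccuracies: the ``Quillen-style patching for monoid algebras'' you invoke to reduce to local $R$ is itself a nontrivial result that must be quoted or proved; and Milnor patching for unimodular rows does not formally yield Milnor patching for rank-$n$ projective modules---the latter is a separate (true) property of generalized Karoubi squares that has to be invoked in its own right. As it stands your text is a road map to Swan's proof rather than a proof; citing (\cite{Swan-MR1144038}, Theorem 1.2), as the paper does, is the honest alternative.
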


This theorem gives us a way to see when projective modules over $R[M]$ are extended from $R$. If $U$ is trivial$,$ then all projective $R[M]$-modules of rank $n$ are extended from $R$. Also$,$ $U$ being torsion free$,$ is a filtered limit of finite rank free abelian groups. This leads to applications$,$ when sufficient information regarding $\mathcal{P}_n(R[\mathbb{Z}^r])$ is provided. 

Most results of \cite{MKK&SAL-MR3153610} proved for the regular ring $B$ can be generalized to $B[M],$ where  $M$ is a commutative cancellative torsion-free seminormal monoid. We state some results 
generalizing the results in Section $4$ and $5$ of \cite{MKK&SAL-MR3153610}. 

\begin{theorem}\label{t6}
	Let $A$ be a ring of type $R[d,m,n]^*,$ where $(R, \mathfrak{m},K)$ is a regular local ring containing a field $k$ such that either char $k = 0$ or char $k=p$ and tr-deg $K/\mathbb{F}_p \geq 1$. Let $M$ be a commutative cancellative torsion-free seminormal monoid. Then
	\begin{enumerate} 
		\item all projective $A[M]$ and $A[M] \otimes_R R(T)$-modules are free;
		\item if $f, g\in R$ form a part of a regular system of parameters$,$ then all projective $A[M]_f$ and $A[M]_{fg}$-modules are free;
		\item if $g_1, \ldots, g_t \in R$ form a part of a regular system of parameters$,$ then projective $A[M]_{{g_1}\ldots {g_t}}$-modules of rank $\geq t$ are free. 
	\end{enumerate} 
\end{theorem}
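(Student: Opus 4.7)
The strategy I would adopt combines Swan's monoid-patching theorem (Theorem~\ref{t-3}) with the ``trivial-monoid'' freeness results of Keshari--Lokhande \cite{MKK&SAL-MR3153610}. For each of the coefficient rings $A'$ that appears across parts~(1)--(3) --- namely $A$, $A\otimes_R R(T)$, $A_f$, $A_{fg}$, and $A_{g_1\cdots g_t}$ --- I would verify the two hypotheses needed to feed $A'$ into Theorem~\ref{t-3}: (a) $A'\in\mathcal{R}_n$ for the relevant ranks $n$, and (b) the map $\mathcal{P}_n(A')\to\mathcal{P}_n(A'[U])$ is surjective, where $U$ is the torsion-free abelian unit group of $M$. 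Once both hold, Theorem~\ref{t-3} forces every rank-$n$ projective $A'[M]$-module to be extended from $A'$, and freeness over $A'$ is exactly what \cite{MKK&SAL-MR3153610} provides.

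For part~(1), step~(a) is handled by Popescu's Theorem~\ref{t5}: write $R$ as a filtered colimit of regular local rings essentially of finite type over $\mathbb{Z}$, so that $A$ becomes a colimit of rings of type $R_\alpha[d,m,n]^*$ over regular $\mathbb{Z}$-spots, which belong to $\mathcal{R}_n$ by Swan \cite{Swan-MR1144038} and Theorem~\ref{t8}. For step~(b), since $U$ is a filtered union of free abelian groups of finite rank, it suffices to prove surjectivity for $U=\mathbb{Z}^r$. The key observation is that
\[
A[\mathbb{Z}^r] \;=\; A[Y_1^{\pm 1},\ldots,Y_r^{\pm 1}]
\]
is again of type $R[d,m+r,n+r]^*$: each $Y_j$ is adjoined as a new indeterminate, and its inversion fits the starred data with $f_i(T)=T$ and $l_i=Y_j$. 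Hence all projective modules over $A[\mathbb{Z}^r]$ are free by \cite{MKK&SAL-MR3153610}, so $\mathcal{P}_n(A[\mathbb{Z}^r])$ is the singleton $\{[A[\mathbb{Z}^r]^n]\}$ and (b) is immediate. The statement about $A\otimes_R R(T)$ follows verbatim after replacing $R$ by $R(T)$.

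Parts~(2) and~(3) use the same two-step template with $A$ replaced by $A_f$, $A_{fg}$, or $A_{g_1\cdots g_t}$. In each case the Laurent extension of the relevant base ring still lies in the class of rings for which \cite{MKK&SAL-MR3153610} proves freeness in the Quillen-conjecture regime; combined with $\mathcal{R}_n$-membership, Swan's patching reduces the monoid question to the corresponding statement in \cite{MKK&SAL-MR3153610}. The rank bound $r\geq t$ in~(3) is precisely the one forced by the rank threshold in the corresponding result of \cite{MKK&SAL-MR3153610}, which relies on Bass-type cancellation.

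The main obstacle I anticipate is not logical but combinatorial: at each stage one must verify that the Laurent-polynomial extension $A'[Y_1^{\pm 1},\ldots,Y_r^{\pm 1}]$ retains the starred form $R[d,m',n']^*$ (or its $f$-inverted analogue) so that the hypotheses of \cite{MKK&SAL-MR3153610} can be legitimately invoked. This amounts to tracking the conditions $f_i\in k[T]$ and $r_i\in k$, together with which parameters remain regular, through each localization and indeterminate adjunction; the verification is formal but is the only place where a mismatch could scuttle the argument.
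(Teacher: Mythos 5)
Your proposal matches the paper's proof in all essentials: both establish membership of each coefficient ring in $\mathcal{R}_n$ (via Theorem~\ref{t8}, whose proof is exactly your Popescu-plus-Swan argument), reduce the unit group $U$ to $\mathbb{Z}^r$ by a filtered-limit argument, observe that $A'[\mathbb{Z}^r]$ is again of type $R[d,m+r,n+r]^*$ so that \cite{MKK&SAL-MR3153610} gives freeness, and then invoke Theorem~\ref{t-3} to pass from $U$ to $M$. The paper simply runs this uniformly for all five rings $B$ at once, so there is nothing substantive to distinguish the two arguments.
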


\begin{proof}
	Let $B$ be any one of the rings $A, A\otimes R(T), A_f, A_{fg}, A_{g_1,\ldots ,g_t}$. Then $B$ is a regular ring containing a field$,$ hence belongs to $\mathcal R_n$ for all $n>0$ by Theorem \ref{t8}.
	When $M=0,$ we are done by \cite{MKK&SAL-MR3153610}. Let $U$ be the group of units of $M$. If we show that projective $B[U]$-modules are extended from $R,$ then by Theorem \ref{t-3}$,$ projective $B[M]$-modules are extended from $B$. Hence the conclusion will again follow from \cite{MKK&SAL-MR3153610}.
	
	Since $M$ and hence $U$ is torsion-free$,$ $U$ is direct limit of finite rank free abelian groups. Then $B[\mathbb Z^r]$ is of the type $R[d,m+r,n+r]^*,$ the result follows from \cite{MKK&SAL-MR3153610}.
\end{proof}


\bibliographystyle{abbrv}
\bibliography{paper.bib}

\end{document}